\patchcmd{\thebibliography}{\leftmargin\labelwidth}{\leftmargin\labelwidth\addtolength\itemsep{-0.1\baselineskip}}{}{}
\author{Zhiyang He\thanks{Department of Mathematical Sciences, Carnegie Mellon University, Pittsburgh, PA, 15213, USA. This research was supported by a Summer Undergraduate Research Fellowship at CMU. \texttt{szh@andrew.cmu.edu}} \and Michael Tait\thanks{Department of Mathematical Sciences, Carnegie Mellon University, Pittsburgh, PA, 15213, USA. Supported in part by NSF grant DMS-1606350. \texttt{mtait@cmu.edu}}}
\title{Hypergraphs with few Berge paths of fixed length between vertices}
\date{}
\newtheoremstyle{mystyle}
  {}
  {}
  {\itshape}
  {}
  {\bfseries}
  {.}
  { }
  {\thmname{#1}\thmnumber{ #2}\thmnote{ (#3)}}%
\theoremstyle{mystyle}
\newtheorem{theorem}{Theorem}
\newtheorem{lemma}{Lemma}
\newtheorem{claim}{Claim}
\DeclareMathOperator{\ex}{ex}
\newcommand*{\TODO}[1]{\marginpar{\usefont{T1}{fxb}{o}{b}\fontsize{24pt}{26pt}\selectfont!}}
\begin{document}
\maketitle

\begin{abstract}
In this paper we study the maximum number of hyperedges which may be in an $r$-uniform hypergraph under the restriction that no pair of vertices has more than $t$ Berge paths of length $k$ between them. When $r=t=2$, this is the even-cycle problem asking for $\mathrm{ex}(n, C_{2k})$. We extend results of F\"uredi and Simonovits and of Conlon, who studied the problem when $r=2$. In particular, we show that for fixed $k$ and $r$, there is a constant $t$ such that the maximum number of edges can be determined in order of magnitude. 
\end{abstract}

\section{Introduction}

The {\em Tur\'an number} for a fixed graph $F$ is the maximum number of edges that an $n$ vertex graph may have without containing $F$ as a subgraph. This function is denoted by $\mathrm{ex}(n, F)$ and its study for various graphs $F$ is a classical problem in extremal combinatorics. The celebrated Erd\H{o}s-Stone theorem \cite{ErdosStone} gives that 
\[
\mathrm{ex}(n, F) = \left(1 - \frac{1}{\chi(F)-1} + o(1)\right) \binom{n}{2},
\]
and asymptotically solves the problem whenever $\chi(F) \geq 3$. However, for bipartite $F$ it only gives $\mathrm{ex}(n, F) = o(n^2)$, and determining the order of magnitude for the Tur\'an number of a bipartite graph in general is a notoriously difficult problem (see \cite{FurediSimonovits} for a survey). 

When $F = C_{2k}$, the study of $\mathrm{ex}(n, C_{2k})$ is known as the even-cycle problem. This problem was first studied in 1938 by Erd\H{o}s \cite{Erdos1938} and has since become a central problem in extremal graph theory. A general upper bound of $\mathrm{ex}(n, C_{2k}) = O_k(n^{1+1/k})$ was first published by Bondy and Simonovits \cite{BondySimonovits}. Since then, improvements have been made to the multiplicative constant \cite{Verstraete, Pikhurko, BukhJiang}, and constructions have been found showing that the order of magnitude is correct for $k\in \{2,3,5\}$ \cite{Brown, ErdosRenyiSos, Wenger, Benson}. However, besides $C_4$, $C_6$, and $C_{10}$, the order of magnitude is unknown. 

We are interested in a generalization of this problem to hypergraphs. Note that a $C_{2k}$ is a pair of internally disjoint paths of length $k$ between a fixed pair of vertices. We will study hypergraphs where we forbid a certain number of paths of length $k$ between vertices. In a hypergraph $H$, a {\em Berge path} of length $k$ is a set of distinct vertices $v_0,v_1,\cdots, v_k$ and a set of distinct hyperedges $h_1,h_2,\cdots, h_k$ such that $\{v_{i-1}, v_i\} \in h_i$ for $1\leq i\leq k$. Note that the hyperedges $\{h_i\}$ could intersect in many different ways and in general two Berge paths of length $k$ need not be isomorphic. We call the vertices $v_0,\cdots, v_k$ the {\em core vertices} of the Berge path.

The definition of Berge paths and cycles was extended to arbitrary graphs by Gerbner and Palmer \cite{GerbnerPalmer}. We say a hypergraph $H$ is a Berge-$F$ if there is a bijection $\phi:E(F) \to E(H)$ such that $e \subset \phi(e)$ for all $e\in E(F)$. Again note that two hypergraphs $H_1$ and $H_2$ may each be a Berge-$F$ and could be non-isomorphic. We denote by $F^{B, r}$ the set of all $r$-uniform hypergraphs which are a Berge-$F$ and we will write $F^B$ when the uniformity is fixed. Given a family of $r$-uniform hypergraphs $\mathcal{F}$ we denote the maximum number of hyperedges in an $n$ vertex $r$-uniform hypergraph which does not contain any $F\in \mathcal{F}$ as a subhypergraph by $\mathrm{ex}_r(n, \mathcal{F})$. 

In this paper we are interested in studying $\mathrm{ex}_r(n, \mathcal{F})$ when $\mathcal{F}$ is a family of Berge theta graphs. A {\em theta graph}, denoted by $\Theta_{k,t}$ is the ($2$-uniform) graph given by a set of $t$ internally disjoint paths of length $k$ between a fixed pair of vertices. Note that $\Theta_{k,2} = C_{2k}$ and so the study of the Tur\'an number of $\Theta_{k,t}$ generalizes the even-cycle problem. We will study $r$-uniform hypergraphs which do not contain a Berge-$\Theta_{k,t}$, i.e. we will study the Tur\'an number for the family $\Theta_{k,t}^B$. An alternative definition of a hypergraph in $\Theta_{k,t}^B$ is a set of distinct vertices $x,y, v_1^1,\cdots, v_{k-1}^1, \cdots, v_{1}^t,\cdots, v_{k-1}^t$ and a set of distinct $r$-edges $h_1^1,\cdots, h_k^1,\cdots , h_1^t, \cdots, h_k^t$ such that $\{x, v_1^i\} \subset h_1^i$, $\{v_{j-1}^i, v_{j}^i\} \subset h_{j}^i$, and $\{v_k^i, y\} \subset h_k^i$ for $1\leq i\leq t$ and $2\leq j\leq k-1$. That is, we are forbidding that a pair of vertices have $t$ Berge paths of length $k$ between them with disjoint internal core vertices. For ($2$-uniform) graphs, the study of $\mathrm{ex}(n, \Theta_{k,t})$ has been investigated in \cite{BukhTait, Conlon, FaudreeSimonovits}. In particular, Faudree and Simonovits gave a more general version of the upper bound in the even-cycle problem. 

\begin{theorem}[Faudree and Simonovits \cite{FaudreeSimonovits}]\label{FS}
Given two integers $k$ and $t$, there exists a constant $c_{k,t} > 0$, such that 
$$\ex(n, \Theta_{k,t}) \le c_{k,t}n^{1+\frac{1}{k}}.$$
\end{theorem}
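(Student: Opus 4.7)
The plan is to adapt the classical Bondy--Simonovits strategy for bounding $\ex(n,C_{2k})$, augmented by a dichotomy argument that upgrades ``many paths between a pair'' to ``$t$ internally disjoint paths between a pair''.

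\textbf{Step 1: Minimum-degree reduction.} Assume $G$ has $n$ vertices and more than $c_{k,t}\, n^{1+1/k}$ edges, where $c_{k,t}$ is to be chosen. Iteratively delete any vertex of degree less than $\tfrac{1}{2} c_{k,t} n^{1/k}$; this removes strictly fewer than half of the edges and leaves a subgraph $G' \subseteq G$ with minimum degree at least $\delta := \tfrac{1}{2} c_{k,t} n^{1/k}$.

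\textbf{Step 2: Counting paths of length $k$.} In $G'$, for any vertex $v$, I extend a path greedily: at each step at least $\delta - k \geq \delta/2$ neighbors have not yet been visited, so $v$ is the endpoint of at least $(\delta/2)^k$ distinct paths of length $k$ in $G'$ (provided $\delta \geq 2k$, which we may assume by taking $c_{k,t}$ large). Summing over all starting vertices and averaging over the $\binom{n}{2}$ endpoint pairs, some pair $(x,y)$ admits at least
\[
M \;=\; \Omega_k\!\bigl(c_{k,t}^{\,k}\bigr)
\]
distinct paths of length $k$. Choosing $c_{k,t}$ sufficiently large (as a function of $k$ and $t$) makes $M$ as large as needed for the next step.

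\textbf{Step 3 (main obstacle): Extracting $t$ internally disjoint paths.} Let $\mathcal{P}$ be a family of at least $M$ paths of length $k$ from $x$ to $y$. I split into two cases based on how concentrated $\mathcal{P}$ is on individual vertices.

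If no vertex lies on more than $M/(2kt)$ members of $\mathcal{P}$, a direct greedy argument produces $\Theta_{k,t}$: having selected $i<t$ pairwise internally disjoint paths from $\mathcal{P}$, they use at most $(t-1)(k-1)$ internal vertices in total, each killing at most $M/(2kt)$ candidates, so fewer than $M/2$ paths are excluded and a next internally disjoint path may be chosen. Otherwise, some vertex $w$ is used by more than $M/(2kt)$ paths; pigeonholing on its position $j \in \{1,\dots,k-1\}$ yields a subfamily of at least $M/(2k^2t)$ paths all passing through $w$ in their $j$th position. Each such path splits at $w$ into an $x$-to-$w$ piece of length $j$ and a $w$-to-$y$ piece of length $k-j$. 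Recursing on each side to find many internally disjoint subpaths, and then matching up left halves with right halves so the two halves do not collide outside $\{w\}$, produces the desired $\Theta_{k,t}$.

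The hardest part is controlling the cascading loss of the path count under the recursion in the second case: each pigeonhole step shrinks $M$ by a factor depending on $k$ and $t$, and at each level one must still argue that distinct halves can be combined without their interiors colliding. Iterating at most $k$ times strictly decreases the path length on one side each round, so the process terminates. Choosing $c_{k,t}$ to grow sufficiently rapidly in $k$ (and polynomially in $t$ for each fixed $k$) absorbs all these losses. This bookkeeping on the constant is the technical heart of the Faudree--Simonovits argument; the $n^{1+1/k}$ exponent itself emerges already at Step 2 and is what one expects from the $C_{2k}$ case.
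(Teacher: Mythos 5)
The paper does not prove Theorem \ref{FS}: it is quoted from Faudree and Simonovits, so there is no in-paper argument to compare against. Judging the proposal on its own terms, Steps~1 and~2 are sound (the standard minimum-degree cleaning, and the greedy count showing that some pair $(x,y)$ has $M = \Omega_k(c_{k,t}^k)$ paths of length $k$), and Case~1 of Step~3 (no vertex lies on more than $M/(2kt)$ of the paths) does correctly extract $t$ pairwise internally disjoint paths by greedy exclusion.

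The gap is in Case~2 of Step~3. When some vertex $w$ lies on a large subfamily of the paths at a fixed position $j\in\{1,\dots,k-1\}$, every path produced by ``matching up left halves with right halves'' passes through $w$, and $w$ is an \emph{internal} vertex of each of these length-$k$ paths. Any $t$ of them therefore share the internal vertex $w$ and do not form a $\Theta_{k,t}$. At best, recursing on one side eventually yields many internally disjoint $x$-to-$w$ (or $w$-to-$y$) paths of some length $j<k$, i.e.\ a $\Theta_{j,t}$, which is a different graph and is not forbidden by the hypothesis. The obstruction is not a bookkeeping issue that a larger $c_{k,t}$ absorbs --- it is structural. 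Indeed ``many $x$-$y$ paths of length $k$'' does not imply ``many internally disjoint ones'': an $x$-$w$ edge together with $m$ internally disjoint $w$-$y$ paths of length $k-1$ gives $m$ length-$k$ paths from $x$ to $y$ with no two of them internally disjoint, and here essentially every path goes through $w$, so the ``fall back to paths avoiding $w$'' option is unavailable. The Faudree--Simonovits proof avoids this entirely: it is a breadth-first-search argument in the spirit of Bondy--Simonovits, which examines the bipartite graphs between consecutive BFS layers from a single root and shows that forbidding $\Theta_{k,t}$ forces each such layer graph to be sparse, contradicting the minimum degree. That route never needs to disentangle a family of overlapping paths sharing an internal vertex, which is exactly the step your dichotomy cannot complete.
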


Our first main result is an extension of this bound to $r$-uniform hypergraphs. 

\begin{theorem}\label{upperbound}
For fixed $r$, given two integers $k$ and $t$, there exists a constant $c_{r,k,t}$, such that 
$$\ex_r(n, \Theta_{k,t}^B) \le c_{r,k,t}n^{1+\frac{1}{k}}.$$
\end{theorem}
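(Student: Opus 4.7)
The plan is to reduce Theorem \ref{upperbound} to the $2$-uniform case established in Theorem \ref{FS} via a shadow-graph argument. Given an $r$-uniform hypergraph $H$ on $n$ vertices containing no Berge-$\Theta_{k,t}$, I would associate to $H$ an auxiliary $2$-uniform graph $G$ on $V(H)$ whose edges come from pairs inside the hyperedges of $H$. The goal is to show (a) $G$ contains no $\Theta_{k,T}$ for some constant $T = T(r,k,t)$, and (b) $|E(H)|$ is bounded by a constant multiple (depending only on $r,k,t$) of $|E(G)|$. Combining (a) with Theorem \ref{FS} bounds $|E(G)|$ by $c_{k,T}\, n^{1+1/k}$, and then (b) yields the desired bound on $|E(H)|$.

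For (b) I would use a co-degree cleanup. Let $d(\{u,v\}) = |\{h \in E(H): \{u,v\} \subset h\}|$ and fix a threshold $D = D(r,k,t)$. Split $E(H)$ into a ``light'' part $H'$, comprising hyperedges in which every pair has $d(\cdot) \le D$, and a ``heavy'' part. On $H'$, the $2$-shadow $G$ satisfies $|E(G)| \ge |E(H')|\binom{r}{2}/D$, and the heavy part must be handled separately: an abundance of hyperedges sharing a common pair, combined with the surrounding shadow structure, should itself yield a Berge-$\Theta_{k,t}$ once the heavy part exceeds size $O(n^{1+1/k})$. For (a), suppose $G$ has $T$ internally disjoint paths $P_1, \ldots, P_T$ of length $k$ between vertices $x,y$. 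Each edge of each $P_j$ is witnessed by at least one hyperedge of $H'$, so I would iteratively lift the paths: for each $P_j$ in turn, apply Hall's theorem on the bipartite graph between the $k$ edges of $P_j$ and the hyperedges of $H'$ covering them, selecting a Berge lift that avoids previously used hyperedges and already-claimed internal vertices. Obtaining $t$ successful lifts produces a Berge-$\Theta_{k,t}$, contradicting the assumption on $H$.

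The main obstacle is controlling the Hall condition during the lifting step. A single hyperedge of size $r$ can simultaneously witness up to $\binom{r}{2}$ pairs of $G$, so it may be the only hyperedge covering several edges of one path, or edges across several different paths---which is precisely how Hall's condition can fail. The co-degree cleanup bounding $d(\cdot) \le D$ limits how often any given pair is witnessed, while each completed lift consumes only $k$ hyperedges and $k-1$ internal vertices, each of which can spoil only a bounded number of future paths. Taking $T$ sufficiently large in terms of $r,k,t$ should force the iterative procedure to succeed in extracting $t$ Berge lifts. The delicate technical heart of the argument is the coupling among the threshold $D$, the parameter $T$, and the losses incurred per lift, together with the separate verification that the heavy part of $H$ itself satisfies the target bound $O(n^{1+1/k})$.
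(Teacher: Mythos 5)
Your overall strategy---reduce to the $2$-uniform case and invoke Theorem~\ref{FS}---is the same as the paper's, but the reduction you propose (full $2$-shadow, co-degree cleanup, Hall-theorem lifting) is genuinely different, and step (a) has a real gap that large $T$ cannot repair. Consider $r=3$, $k=3$, and the hypergraph $H$ consisting of $N$ ``units'' $\{x,a^i,b^i\}$, $\{a^i,b^i,y\}$ with the internal vertices $a^i,b^i$ fresh for each $i$. Every pair has co-degree at most $2$, so the whole hypergraph is ``light'' for any reasonable threshold $D$. Moreover $H$ contains \emph{no} Berge path of length $3$ from $x$ to $y$ at all (the first hyperedge must be some $\{x,a^i,b^i\}$, the last some $\{a^j,b^j,y\}$, and there is no third hyperedge bridging them), so $H$ is Berge-$\Theta_{3,t}$-free for every $t$. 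Yet the shadow $G$ contains the $N$ internally disjoint length-$3$ paths $x,a^i,b^i,y$, i.e.\ a $\Theta_{3,N}$ with $N$ linear in $n$. Hall's condition fails for every one of these paths, since the three shadow edges of $x,a^i,b^i,y$ are collectively covered by only two hyperedges. So there is no constant $T=T(r,k,t)$ making the shadow $\Theta_{k,T}$-free, and the failure is not a per-lift bounded loss that a large $T$ can absorb---it is a structural obstruction affecting all paths simultaneously.

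The paper's reduction sidesteps exactly this problem by not taking the full shadow. It orders the hyperedges arbitrarily and selects \emph{one} pair from each, greedily choosing the pair that has been selected fewest times so far, producing a multigraph $G$ with $|E(G)|=|E(H)|$. Because each edge of $G$ is tagged by a distinct hyperedge of $H$, lifting any Berge structure from $G$ to $H$ is immediate---no Hall's theorem and no co-degree cleanup are needed, and (a) becomes trivial. The work instead goes into the multiplicity bound: a pair selected too often must, by the greedy rule, sit inside a hyperedge whose other pairs were all selected nearly as often, and iterating this observation builds $t$ internally disjoint Berge paths of length $k$, contradicting $\Theta_{k,t}^B$-freeness. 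Your intuition that $|E(H)|$ should be a constant multiple of the number of ``useful'' pairs is correct and corresponds to this multiplicity bound, but the auxiliary graph has to be built so that lifting is automatic; the unrestricted shadow does not have that property.
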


Recently, Conlon \cite{Conlon} complemented the upper bound of Faudree and Simonovits and showed that the order of magnitude is correct when $t$ is large enough relative to $k$.

\begin{theorem}[Conlon \cite{Conlon}]\label{Conlon}
For any natural number $k\ge 2$, there exists a natural number $t$ such that 
$$\ex(n, \Theta_{k,t}) = \Omega_k(n^{1+\frac{1}{k}}).$$
\end{theorem}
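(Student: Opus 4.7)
My plan is to apply the random algebraic method of Bukh. Fix a prime power $q$, take the vertex set $V = \mathbb{F}_q^k$ with $n := |V| = q^k$, and fix a sufficiently large degree parameter $d = d(k)$. Choose independent and uniform polynomials $f_1, \ldots, f_{k-1} \in \mathbb{F}_q[x_1, \ldots, x_k, y_1, \ldots, y_k]$, each symmetric under $x \leftrightarrow y$ and of total degree at most $d$, and form the random graph $G$ on $V$ with $uv \in E(G)$ iff $u \neq v$ and $f_1(u,v) = \cdots = f_{k-1}(u,v) = 0$. For $d$ large and any fixed distinct $u,v$, the vector $(f_1(u,v), \ldots, f_{k-1}(u,v))$ is uniform on $\mathbb{F}_q^{k-1}$, so $\Pr[uv \in E(G)] = q^{-(k-1)}$ and $\mathbb{E}[|E(G)|] = \Theta(q^{k+1}) = \Theta(n^{1+1/k})$.

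The core of the argument is an algebraic dichotomy. For distinct $u,v \in V$, define
\[
W(u,v) = \{(w_1, \ldots, w_{k-1}) \in V^{k-1} : uw_1, w_1 w_2, \ldots, w_{k-1} v \in E(G)\},
\]
which is the set of $\mathbb{F}_q$-points of an affine variety cut out by $k(k-1)$ polynomial equations of total degree at most $d$. Bukh's random-algebraic dichotomy, extended to this iterated setting, yields a constant $C = C(k,d)$ such that, with probability $1 - o(1)$ over the $f_i$, every pair $(u,v)$ either satisfies $|W(u,v)| \leq C$ or else $W(u,v)$ has positive algebraic dimension (and hence $|W(u,v)| = \Omega(q)$). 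Moreover, the ``bad'' locus $B \subseteq V^2$ of pairs of the second kind lies inside a proper subvariety of $V^2$ of bounded degree, so $|B| = O(q^{2k-1})$ by Lang--Weil.

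Now set $t = C + 1$. Any copy of $\Theta_{k,t}$ in $G$ would contribute more than $C$ walks of length $k$ between its two branch vertices, forcing that pair into $B$. Combining $|B| = O(q^{2k-1})$ with an algebraic bound on the number of ordered $t$-tuples of internally disjoint $k$-paths between a fixed bad pair, one shows $\mathbb{E}[\#\Theta_{k,t}(G)] = o(q^{k+1})$, provided $t$ and $d$ are chosen large enough in terms of $k$. Some realization of $G$ then has $\Omega(n^{1+1/k})$ edges and $o(n^{1+1/k})$ copies of $\Theta_{k,t}$; deleting one edge per copy leaves a $\Theta_{k,t}$-free graph with $\Omega(n^{1+1/k})$ edges.

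The most delicate ingredient is the dichotomy itself. For a single polynomial relation the trichotomy is classical (via Bezout and Lang--Weil), but here one must iterate through $k$ composed edge relations and show at each step that the locus of positive-dimensional fibers has codimension at least one in the parameter space. Controlling this iteration, and separately bounding the number of $\Theta_{k,t}$ configurations that can arise from a single positive-dimensional bad pair, will be the heart of the proof.
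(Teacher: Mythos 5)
The paper does not prove Theorem~\ref{Conlon} directly (it is cited from Conlon), but Section~\ref{lower bound proof} proves the hypergraph analogue by the same method, explicitly described as ``an adaption of \cite{Conlon}.'' Comparing your sketch to that argument: your construction and the algebraic dichotomy are the right ingredients, but the way you handle the bad pairs departs from the actual proof and contains a gap that would sink the argument. The dichotomy (Theorem~\ref{langweil} here, from Bukh--Conlon) is \emph{deterministic}: for every fixed choice of polynomials and every pair $(u,v)$, the path-count is either $\leq C$ or $\geq q/2$. The randomness enters in bounding $\Pr[|W(u,v)| \geq q/2]$, and the engine for that is a moment computation you omit entirely. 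One shows $\mathbb{E}[|W(u,v)|^m] = O_k(1)$ for $m \leq d/k$ by grouping collections of $m$ paths according to how many edges their union uses and invoking independence of the random polynomial values on few points (Lemma~\ref{Ind2}); Markov on the $m$th moment with $m = 2k+1$ then gives $\Pr[|W(u,v)| \geq q/2] \leq C/(q/2)^{2k+1}$, so $\mathbb{E}[\#\text{bad pairs}] = O(q^{2k - (2k+1)}) = O(1/q) \to 0$. With high probability there are \emph{no} bad pairs, the graph is already $\Theta_{k,t}$-free, and no deletion step is needed.

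Your replacement step --- that the bad locus $B$ lies in a proper subvariety of $V^2$ of bounded degree and hence $|B| = O(q^{2k-1})$ by Lang--Weil --- is not justified. The locus of $(u,v)$ with positive-dimensional fiber depends on the random $f_i$; upper-semicontinuity of fiber dimension gives a closed locus, but you have not argued that it is proper (which needs the generic fiber to be zero-dimensional, itself a claim about the random choice) nor that its complexity is uniformly bounded. More decisively, even granting $|B| = O(q^{2k-1})$, your deletion strategy requires $\mathbb{E}[\#\Theta_{k,t}] = o(q^{k+1})$, and already one copy of $\Theta_{k,t}$ per bad pair gives $\Omega(q^{2k-1})$, which exceeds $q^{k+1}$ for every $k \geq 3$. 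So the crude Lang--Weil bound on $B$ cannot substitute for the moment bound; the moment bound is the technical heart of Conlon's proof (and of the hypergraph proof in this paper) and your sketch needs it.
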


Our second theorem shows that this is also the case in higher uniformities.

\begin{theorem}\label{lowerbound}
For fixed $r$ and any natural number $k\ge 2$, there exists a natural number $t$ such that 
$$\ex_r(n, \Theta_{k,t}^B) = \Omega_{k,r}(n^{1+\frac{1}{k}}).$$
\end{theorem}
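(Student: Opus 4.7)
The plan is to lift Conlon's construction (Theorem~\ref{Conlon}) from graphs to $r$-uniform hypergraphs via an edge-expansion. First I would invoke Theorem~\ref{Conlon} to fix a graph $G$ on $N$ vertices with $m = \Omega_k(N^{1+1/k})$ edges and no $\Theta_{k, t_0}$ subgraph, for some $t_0 = t_0(k)$. To build the hypergraph $H$, adjoin a disjoint set $W$ of $\Theta(N)$ auxiliary padding vertices to $V(G)$, and for each edge $e \in E(G)$ associate a hyperedge $h_e = e \cup S_e$, where $S_e$ is an $(r-2)$-subset of $W$ chosen by an appropriate rule (for concreteness, the plan is to sample $S_e$ uniformly at random and independently over $e$). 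Since $V(G) \cap W = \emptyset$, the map $e \mapsto h_e$ is injective, so $|V(H)| = \Theta(N)$ and $|E(H)| = m = \Omega_{k, r}(|V(H)|^{1+1/k})$, the desired asymptotic hyperedge count.

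The core combinatorial observation is that whenever two consecutive core vertices $v_{i-1}, v_i$ of a Berge path both lie in $V(G)$, the underlying $G$-edge of the corresponding hyperedge is forced to be $\{v_{i-1}, v_i\}$ itself. Hence a Berge path in $H$ with all core vertices in $V(G)$ corresponds to a genuine path of length $k$ in $G$, and so the $\Theta_{k, t_0}$-freeness of $G$ caps the number of internally disjoint such paths at $t_0 - 1$. I would then classify all other Berge paths by their \emph{profile}, which records for each of the $k+1$ core vertices whether it lies in $V(G)$ or in $W$, and bound the expected contribution of each profile to the Berge $\Theta_{k, t}^B$ count. For each profile, each padding-vertex incidence contributes a factor of $O(1/|W|)$ to the realization probability, balanced against a combinatorial count of skeletons scaling with walk-counts in $G$. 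Summed over profiles, endpoint pairs, and $t$-tuples of internally disjoint paths, the expected number of Berge $\Theta_{k, t}^B$ copies in $H$ can be kept sub-linear in $m$ for $t = t(k, r)$ sufficiently large; the deletion method then removes a negligible fraction of hyperedges to yield a Berge $\Theta_{k, t}^B$-free hypergraph, proving the theorem.

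The main obstacle is handling Berge $\Theta_{k, t}^B$ copies whose endpoints (or many of whose internal vertices) lie in $W$: the $\Theta_{k, t_0}$-freeness of $G$ gives no direct bound, and the expected count in these profiles can come dangerously close to $|E(H)|$ unless the skeleton count and realization probability are carefully balanced. Overcoming this may well require a preprocessing step---for instance, trimming padding vertices of atypically high hypergraph-degree, or exploiting pseudorandomness of Conlon's $G$ (near-regular degrees, bounded codegrees, controlled walk counts)---before the first-moment bound kicks in. Getting this balance correct across all profiles is the delicate technical heart of the proof, and determines the final value of $t = t(k, r)$.
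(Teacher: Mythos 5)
Your proposal takes a genuinely different route from the paper, but it has a gap that I believe is fatal as stated. The paper does not pad Conlon's $2$-uniform graph; it builds the $r$-uniform hypergraph directly as a random algebraic object (an $r$-partite hypergraph on $\mathbb{F}_q^{k}$-copies, with hyperedges cut out by $k(r-1)-1$ independent random polynomials) and then identifies the set of Berge paths of length $k$ between a fixed pair as (essentially) the $\mathbb{F}_q$-points of a bounded-complexity variety. The single load-bearing fact is the Lang--Weil-type dichotomy (Theorem~\ref{langweil}): the number of paths between any pair is either $\le C_{k,r}$ or $\ge q/2$. Combined with bounded $(2k{+}1)$-th moments, this turns Markov's inequality into a bound of the form $\Pr[\,|S|>C_{k,r}\,]\le C/(q/2)^{2k+1}$, which is $o(q^{-2k})$ and hence survives a union bound over all $N^2=q^{2k}$ pairs. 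No deletion is needed.

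Your padding construction destroys exactly this dichotomy, and that is where the argument breaks. Take $x,y\in V(G)$ and a single internal core vertex $v_j\in W$. The relevant skeletons are: a $G$-walk of length $j-1$ from $x$, a $G$-walk of length $k-j-1$ into $y$, edges $e_j,e_{j+1}$ pendant at the two free ends, and the matching condition $S_{e_j}\cap S_{e_{j+1}}\neq\emptyset$. With average degree $\Theta(N^{1/k})$ and $|W|=\Theta(N)$, the skeleton count is $\Theta(N)$ and the matching probability is $\Theta(1/N)$, so the expected number of such paths between a fixed pair is $\Theta(1)$. Because the sets $S_e$ are independent, these path events are essentially independent Bernoulli trials, so the path count $Y$ behaves like a Poisson variable with constant mean: $\Pr[Y\ge t]$ is a positive constant depending only on $k,r,t$ and \emph{does not tend to $0$ with $N$}. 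Hence the expected number of bad pairs (and of $\Theta_{k,t}^{B}$ copies) is $\Theta_{k,r,t}(N^2)$, which dwarfs $m=\Theta(N^{1+1/k})$; the deletion method removes essentially all edges. Enlarging $t$ shrinks the constant but cannot overcome the $N^{1-1/k}$ gap, and enlarging $|W|$ enough to fix it (you would need $|W|=\omega(N^{2-1/k})$) destroys the edge density $|E(H)|=\Omega(|V(H)|^{1+1/k})$. The ``trimming high-degree pads'' and ``pseudorandomness of $G$'' ideas do not help: the computation above already assumes the pseudorandom case, and the obstruction is the loss of algebraic rigidity, not degree anomalies. In short, what makes the lower bound work is not a first-moment count, and any construction that replaces the algebraic hyperedge rule by independent random padding forfeits the dichotomy that the argument depends on.
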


In Section \ref{section:upper bound} we prove Theorem \ref{upperbound} and in Section \ref{lower bound proof} we prove Theorem \ref{lowerbound}

\section{Proof of Theorem \ref{upperbound}}\label{section:upper bound}

The proof of this theorem is inspired by a reduction lemma of Gy\H{o}ri and Lemons \cite{GL}. To prove Theorem~\ref{upperbound} from Theorem~\ref{FS}, we prove the following lemma, which says that given any $r$-uniform $\Theta_{k,t}^B$-free hypergraph, we can reduce it to a $\Theta_{k,t}$-free ($2$-uniform) graph which has a constant proportion of the edges of the original $r$-uniform graph. 

\begin{lemma}[Reduction Lemma]
Let $2\le m< r$, and $H$ be a $r$-uniform hypergraph without $\Theta_{k,t}^B$. Define the $m$-uniform hypergraph $G$ in the following way: Order the edges of $H$ arbitrarily and, going through the edges one by one, pick an $m$-set from each hyperedge to be in $G$ where the $m$ set chosen is the one which has been chosen the fewest number of times previously (break ties arbitrarily). Let $M_{k,i,r,m} = \sum_{j=1}^{k+1}{mk(i-1)+jm-m\choose r-m}+k+1$. Then the hypergraph $G$ will have no $\Theta_{k,t}^{B,m}$ and each of its edges will have multiplicity no more than $M_{k,t,r,m}$.
\end{lemma}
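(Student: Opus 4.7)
First, I would dispose of the easier of the two claims, that $G$ is $\Theta_{k,t}^{B,m}$-free. In the greedy construction each hyperedge of $H$ contributes exactly one $m$-edge (a single copy) to $G$, so any collection of distinct $m$-edges of $G$ lifts to a collection of distinct hyperedges of $H$. Thus a hypothetical $\Theta_{k,t}^{B,m}$ in $G$, with endpoints $x,y$, intermediate vertices $\{v_j^i\}$, and $kt$ distinct $m$-edges $\{f_j^i\}$ satisfying $\{v_{j-1}^i,v_j^i\}\subset f_j^i$ (reading $v_0^i=x,\ v_k^i=y$), lifts to $kt$ distinct hyperedges $\{h_j^i\}$ in $H$ with $\{v_{j-1}^i,v_j^i\}\subset h_j^i$, i.e., to a $\Theta_{k,t}^{B,r}$ in $H$, contradicting the hypothesis.

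For the multiplicity bound, suppose some edge $e$ has multiplicity $M\ge M_{k,t,r,m}+1$, realized by hyperedges $h_1,\dots,h_M$ of $H$ in processing order. The central consequence of the greedy rule is a load-balancing property: when $h_j$ is processed, $e$ has been chosen $j-1$ times, and because $e$ is selected as a minimum-count $m$-subset of $h_j$, every $m$-subset of $h_j$ has count at least $j-1$ at that moment, and hence was chosen by at least $j-1$ hyperedges of $H$ processed before $h_j$. In particular, each $m$-subset of a "late" $h_\ell$ sits inside many distinct hyperedges of $H$.

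My plan is to exploit this load balancing to build a $\Theta_{k,t}^{B,r}$ in $H$ between two fixed vertices $x,y\in e$ (available since $m\ge 2$), giving the desired contradiction. I would construct the $t$ internally vertex-disjoint Berge paths one at a time, and each path step by step starting from $x$. At step $j$ of building path $i$, with current head $v_{j-1}^i$, the previously forbidden core vertices number at most $2+(i-1)(k-1)+(j-1)$ and the previously used hyperedges number $(i-1)k+(j-1)$. To extend, I would use an unused late hyperedge $h_\ell$ as a reservoir: each of its $m$-subsets has high count in $G$ and therefore lies in many hyperedges of $H$; I pick an $m$-subset through $v_{j-1}^i$ together with a fresh vertex avoiding the forbidden pool, then select a hyperedge of $H$ containing that $m$-subset which is neither $h_\ell$ nor already used.

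The central obstacle is calibrating the counting so that it exactly matches the formula $M_{k,t,r,m}=\sum_{j=1}^{k+1}\binom{mk(t-1)+jm-m}{r-m}+k+1$. I expect the $j$-th summand $\binom{mk(t-1)+(j-1)m}{r-m}$ to bound the number of "bad" hyperedges of $H$ whose non-head $r-m$ vertices all lie inside the pool of chosen $m$-subsets of previously used hyperedges; at step $j$ of building the $t$-th path this pool has size $m\bigl(k(t-1)+(j-1)\bigr)$, matching the exponent. The trailing $+k+1$ absorbs the $k$ hyperedges and $k+1$ core vertices of the current path itself. Making the injection from each bad extension into an $(r-m)$-subset of the forbidden pool precise, and handling the terminal step $j=k+1$ where the path must close up at $y$ while remaining internally disjoint from the earlier paths, will be the most delicate part.
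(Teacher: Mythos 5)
Your argument for $\Theta_{k,t}^{B,m}$-freeness is correct and identical to the paper's, and your identification of the load-balancing property (all $m$-subsets of a late contributor to the multiplicity of $e$ themselves have high multiplicity) is exactly the right engine. The gap is in the path-building strategy. You propose to grow each path one vertex at a time from $x$ outward, choosing a fresh head $v_j^i$ at step $j$, and closing at $y$ only at the end. This creates two problems. First, the extension step as written, ``pick an $m$-subset [of $h_\ell$] through $v_{j-1}^i$ together with a fresh vertex,'' is not well-defined for $j>1$: the reservoir hyperedges $h_\ell$ all contain $x$ and $y$ (they are contributors to the multiplicity of $e$), but nothing forces the previous head $v_{j-1}^i$ to lie in $h_\ell$, so no $m$-subset of $h_\ell$ passes through $v_{j-1}^i$. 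Second, and more fundamentally, there is no mechanism ensuring the path can close at $y$: when you reach $v_{k-1}^i$ you need a hyperedge of $H$ containing $\{v_{k-1}^i,y\}$, but $v_{k-1}^i$ was chosen merely to be fresh, not to be adjacent to $y$. Your own remark that the terminal step ``will be the most delicate part'' is an understatement; with the forward-marching scheme it is not merely delicate but unproved.

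The paper sidesteps both issues by never detaching the path from $y$. The invariant maintained is that the current path already runs from $x$ to $y$ and that its \emph{last} $m$-edge $e_{i+1}$, the one containing $\{v_i,y\}$, has high multiplicity. Given this, one finds a late hyperedge $h'$ with $e(h')=e_{i+1}$ not contained in the forbidden pool; since $y\in e_{i+1}\subseteq h'$, one may pick a fresh $v_{i+1}\in h'\setminus S'$ and take two new $m$-subsets of $h'$, one through $\{v_i,v_{i+1}\}$ and one through $\{v_{i+1},y\}$, both of which inherit high multiplicity from the greedy rule applied when $h'$ was processed. The path is thus stretched by inserting $v_{i+1}$ just before $y$, the invariant transfers to the new last edge through $\{v_{i+1},y\}$, and the closing step never arises. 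Your binomial bookkeeping for the forbidden pool (size $m(k(t-1)+(j-1))$ at the relevant step) is in the right spirit, but the induction needs to be restructured around this ``anchor at $y$ and insert'' scheme rather than marching forward from $x$.
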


\begin{proof}
Let's first fix the notation we'll use in the rest of the proof. We will call the edges of $H$ as hyperedges and the edges of $G$ just as edges. We'll use $h_i$ to denote the hyperedges, and $e_i$ to denote edges. The edge set of $H$ and $G$ will be $E(H)$ and $E(G)$, respectively, and their (common) vertex set will be denoted as $V$. For every $h\in E(H)$, the $m$-set chosen from $h$ will be denoted as $e(h)$. 

First note that $G$ is $\Theta_{k,t}^{B,m}$ free. Indeed, for any Berge path in $G$, say $v_0e_1v_1e_2...e_{k}v_k$, there is a corresponding Berge path in $H$ $v_0h_1...h_{k}v_k$ where $h_i$ is the hyperedge such that $e_i = e(h_i)$. Therefore, a $\Theta_{k,t}^{B,m}$ in $G$ would imply that there is a $\Theta_{k,t}^{B,r}$ in $H$.

Now assume the lemma is not true and that there is an edge in $G$ which was chosen more than $M_{k,t,r,m}$ times. It suffices for us to construct a $\Theta_{k,t}^{B,m}$ in $G$, which will give us a $\Theta_{k,t}^{B,r}$ in $H$ and results in a contradiction. Let $e$ be an edge in $G$ with some multiplicity at least $M+1$, and $x$, $y\in e$. Consider the last hyperedge which contributed to the multiplicity of $e$, call it $h$. Since every time we select an $m$-set from a hyperedge in $H$, we select the one with the least multiplicity, we know that every other $m$-set in $h$ must have multiplicity at least $M$. Since $r > 2$, we know there exists $v_1\in h\setminus \{x, y\}$. Therefore there exists a hyperedge $h_1\in E(H)$ such that $\{x, v_1\}\subseteq e(h_1)$. On the other hand, note that $v_1, y\in h\setminus \{x\}$, thus there exists $h'_1\in E(H) \setminus \{h_1\}$ such that $\{v_1, y\}\subseteq e(h'_1)$. This gives us a length 2 path from $x$ to $y$, namely $x, v_1, y$ connected by $h_1$ and $h'_1$. Note that $e(h_1)$ and $e(h'_1)$ also have multiplicity at least $M$, and $h$ is not part of the path. 

To extend this result, we prove the following claim, from which the construction follows easily.

\begin{claim}
Let $S\subset V$ be a ``forbidden set", and $M = \sum_{j=1}^{k-1}{|S|+jm-m\choose r-m}+k+1$. For $i \le k-2$, Suppose we have vertices $x, v_1, v_2\cdots v_i, y$ and edges $e_1, \cdots, e_{i+1}$ that forms a path in $G$ in the order given, and the last edge $e_{i+1}\in G$ with $v_i, y\in e_{i+1}$ has multiplicity at least $M - \sum_{j=1}^{i}{|S|+jm-m\choose r-m}-(i-1)+1$. Let $S' = S \cup e_1 \cup\cdots\cup e_{i+1}$. Then we can find $v_{i+1} \notin S'$ such that $x, v_1, v_2\cdots v_i, v_{i+1}, y$ forms a path, and the last edge in this new path containing $v_{i+1}, y$ has multiplicity at least $M - \sum_{j=1}^{i+1}{|S|+jm-m\choose r-m}-i+1$.
\end{claim} 

\begin{proof}
To find the vertex $v_{i+1}$, first we will find a new hyperedge $h'\in E(H)$ such that $e_{i+1}\subset h'$ and $h'\not\subseteq S'$. Note that the set $S'$ has cardinality less than $|S|+(i+1)m$ as we included all $i+1$ edges in the already existing path. We would like to make sure the next edge we choose for the new path is different from all previous edges. Since $e_{i+1}$ must be in $h'$, the number of hyperedges $h$ such that $e(h) = e_{i+1}$ and $h\subset S'$ is at most ${|S|+(i+1)m-m\choose r-m}$. Now we let $E' = \{h\in E(H)\mid e(h) = e_{i+1}, h\not\subseteq S'\}$, then we have
$$
|E'| \ge |\{h\in E(H)\mid e(h) = e_i\}| - {|S|+(i+1)m-m\choose r-m} \ge M - \sum_{j=1}^{i+1}{|S|+jm-m\choose r-m}-(i-1)+1.
$$
Now we pick $h'$ to be the last edge (in the original ordering) of $E'$, which then implies that every $m$-set in $h'$ besides $e_{i+1}$ has multiplicity at least $M - \sum_{j=1}^{i+1}{|S|+jm-m\choose r-m}-i+1$. Since $h'\not\subseteq S'$, we can find $v_{i+1}\in h'$ such that $v_{i+1}\notin S'$. Now we choose two $m$-sets from $h'$, namely $e'_{i+1}$ and $e_{i+2}$ where $v_i, v_{i+1}\in e'_{i+1}$ and $v_{i+1}, y\in e_{i+2}$. This gives us a length $i+2$ path from $x$ to $y$, namely the path $x, v_1, \cdots v_i, v_{i+1}, y$ where the last two edges are $e'_{i+1}$ and $e_{i+2}$, and the previous edges are the same as in the old length $i+1$ path. The last edge, $e_{i+2}$, has multiplicity at least $M - \sum_{j=1}^{i+1}{|S|+jm-m\choose r-m}-i+1$, as desired. Note that the hyperedge $h'$ is not part of the path, and as we discard the edge $e_{i+1}$ in the original path, we add in two new edges, namely $e'_{i+1}$ and $e_{i+2}$.
\end{proof}

With this claim, we can now build the first path from $x$ to $y$ by induction on $k$. The base case when $k=2$ is already constructed before the statement of the claim. In the induction step, when we already has a length $k-1$ path, apply the claim with $S = \varnothing$ and we obtain a length $k$ path, as desired. Note that if all we need is just one path, then we just need the multiplicity of the edge $e$ to be at least $M_{k,1,r,m}$. 

To build $t$ paths, we will construct each path separately. Assume that we have built $i-1$ paths. To build the $i$th path, the forbidden set $S$ would be the union of all edges in previous paths. This ensures that the paths we are building are vertex-independent and edge-distinct. It then follows that to build $t$ vertex-independent paths from $x$ to $y$, it suffices for us to have that the multiplicity of the edge $e$ to be at least $M_{k,t,r,m}$. This gives us a $\Theta_{k,t}^{B,m}$ in the graph $G$. If we then choose hyperedges in $H$ that gives the edges in this $\Theta_{k,t}^{B,m}$, we obtain a $\Theta_{k,t}^{B,r}$ in $H$. Such hyperedges can be chosen because all edges in this $\Theta_{k,t}^{B,m}$ in $G$ have multiplicity at least 1, and these hyperedges are guaranteed to be distinct since for any two hyperedges chosen, say $h_1$ and $h_2$, $e(h_1)\ne e(h_2)$, and therefore $h_1\ne h_2$. This leads to a contradiction. Therefore all edges in $G$ must have multiplicity at most $M_{k,t,r,m}$.
\end{proof}

By Theorem~\ref{FS}, there exists a constant $c_{k,t}$ such that $\ex(n, \Theta_{k,t}) \le c_{k,t}n^{1+\frac{1}{k}}$. Now assume that a $r$-uniform hypergraph has more than $M_{k,t,r,2}c_{k,t}n^{1+\frac{1}{k}}$ edges. Then if we reduce this hypergraph into a ($2$-uniform) graph with the scheme described above, we will either have a graph with more than $c_{k,t}n^{1+\frac{1}{k}}$ edges, or it will have at least one edge with more than $M_{k,t,r,2}$ multiplicity. Both cases imply that there is a $\Theta_{k,t}$ in the reduced graph, which leads to a $\Theta_{k,t}^B$ in the original hypergraph. This completes the proof of Theorem~\ref{upperbound}.

\section{Preliminaries for Theorem \ref{lowerbound}}

To prove Theorem~\ref{lowerbound}, given any natural number $k\le 2$, we need to construct a $\Theta_{k,t}^B$-free $r$-uniform hypergraph with $\Omega(n^{1+\frac{1}{k}})$ edges where $t$ is a large enough constant depending only on $k$ and $r$. Blagojevi\'c, Bukh and Karasev \cite{BKP} found an elegant random algebraic construction for Tur\'an type problems, and we will use this method to construct our hypergraphs. This method has recently been used with success in both the graph \cite{Bukh, BukhTait, Conlon} and hypergraph setting \cite{Jie}.

Let $k$ and $r$ be fixed, and for $q$ a prime power let $\mathbb{F}_q$ be the finite field of order $q$. We will work with polynomials in $kr$ variables over $\mathbb{F}_q$. Let $P_d$ be the set of such polynomials of degree at most $d$. That is, $P_d$ consists of linear combinations of monomials $\prod_{i=1}^{kr} x_i^{\alpha_i}$ where $\sum a_i \leq d$. For the remainder of the paper we will use the term {\em random polynomial} to denote a polynomial chosen uniformly at random from $P_d$. Note that the distribution of random polynomials is equivalent to choosing the coefficient of each monomial $\prod_{i=1}^{kr} x_i^{\alpha_i}$ independently and uniformly from $\mathbb{F}_q$. 


We will need to know the probability that a random polynomial vanishes on a fixed set of points. In particular, if we fix one point, since the constant term of a random polynomial is chosen uniformly from $\mathbb{F}_q$, we have the following lemma.

\begin{lemma}\label{Ind1}
If $f$ is a random polynomial from $P_d$, then, for any fixed $x\in F_{q}^{t}$,
$$
\mathbb{P}[f(x) = 0] = \frac{1}{q}.
$$
\end{lemma}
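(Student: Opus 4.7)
The plan is to decouple the constant term from the rest of the polynomial. Decompose any $f \in P_d$ as $f = c_0 + g$, where $c_0 \in \mathbb{F}_q$ is the constant coefficient and $g$ is the sum of all monomials $\prod_i x_i^{\alpha_i}$ of positive degree (with $\sum \alpha_i \leq d$). Since the paper specifies that a random $f \in P_d$ is obtained by choosing each monomial coefficient independently and uniformly from $\mathbb{F}_q$, the random variables $c_0$ and $g$ are independent, with $c_0$ uniform on $\mathbb{F}_q$.

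Next I would condition on $g$. For any fixed $x \in \mathbb{F}_q^{kr}$, the value $g(x)$ is a fixed element of $\mathbb{F}_q$, and the event $f(x)=0$ is equivalent to $c_0 = -g(x)$. By independence and uniformity of $c_0$, this conditional probability equals $1/q$ regardless of the realized value of $g$. Averaging (via the law of total probability) over the choices of $g$ gives $\mathbb{P}[f(x)=0] = 1/q$.

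The main obstacle is essentially only notational: one must verify that the product distribution on coefficients indeed makes $c_0$ independent of the remaining coefficients, which is immediate from the hypothesis. No algebraic geometry (Schwartz–Zippel or the like) is needed since the claim is about a single point; the result is really a one-line computation once the decomposition $f = c_0 + g$ is written down. I expect the full write-up to be a short paragraph.
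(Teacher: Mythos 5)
Your proposal is correct and is exactly the paper's reasoning: the paper justifies Lemma~\ref{Ind1} in one line by noting that the constant term is uniform over $\mathbb{F}_q$ and independent of the other coefficients, which is precisely your $f = c_0 + g$ decomposition and conditioning argument, just written out in more detail.
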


The next lemma, proved as Lemma 2.3 in \cite{BukhConlon} and Lemma 2 in \cite{Conlon}  extends the conclusion of Lemma~\ref{Ind1}.

\begin{lemma}\label{Ind2}
Assume $x_1, \cdots, x_z$ are $z$ distinct points in $\mathbb{F}_q^{t}$. Suppose $q > {z\choose 2}$ and $d\ge z - 1$. Then if $f$ is a random polynomial from $P_{d}$,
$$
\mathbb{P}[f(x_i) = 0\text{ for all } i = 1, \cdots, z] = \frac{1}{q^z}.
$$ 
\end{lemma}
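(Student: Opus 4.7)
My plan is to establish the lemma by showing that the evaluation map $\phi\colon P_d \to \mathbb{F}_q^z$ defined by $\phi(f)=(f(x_1),\dots,f(x_z))$ is a surjective $\mathbb{F}_q$-linear map. Linearity is immediate since evaluation is linear in the coefficients of $f$. Given surjectivity, every fiber $\phi^{-1}(a)$ has the same size $|P_d|/q^z$, so $\phi(f)$ is uniformly distributed on $\mathbb{F}_q^z$ when $f$ is uniform on $P_d$, and in particular $\mathbb{P}[\phi(f)=0]=1/q^z$, as required.

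The crux is surjectivity, which I would reduce to the univariate case. First, I would produce a single linear form $\ell(X)=c_1X_1+\cdots+c_tX_t$ such that the values $\ell(x_1),\dots,\ell(x_z)$ are pairwise distinct in $\mathbb{F}_q$. For each pair $i\ne j$, the set of coefficient vectors $(c_1,\dots,c_t)$ satisfying $\ell(x_i)=\ell(x_j)$ is a hyperplane through the origin in $\mathbb{F}_q^t$ (since $x_i\ne x_j$ makes the defining linear functional nonzero), hence has exactly $q^{t-1}$ elements. A union bound over the $\binom{z}{2}$ pairs shows that the total number of bad coefficient vectors is at most $\binom{z}{2}q^{t-1}$, which is strictly less than $q^t$ by the hypothesis $q>\binom{z}{2}$, so a separating $\ell$ exists.

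Once such an $\ell$ is fixed, for any target $(a_1,\dots,a_z)\in\mathbb{F}_q^z$ I would apply univariate Lagrange interpolation at the $z$ distinct values $\ell(x_1),\dots,\ell(x_z)$ to produce a polynomial $p(Y)\in\mathbb{F}_q[Y]$ of degree at most $z-1$ with $p(\ell(x_i))=a_i$ for every $i$. Then $f(X):=p(\ell(X))$ has total degree at most $z-1\le d$, so $f\in P_d$, and by construction $\phi(f)=(a_1,\dots,a_z)$. This establishes surjectivity and hence the lemma.

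The main obstacle is the separating-linear-form step, which is precisely where both hypotheses are used: $q>\binom{z}{2}$ is needed for the union bound to beat $q^t$, and $d\ge z-1$ is needed to ensure the interpolating polynomial actually lies in $P_d$. The remaining ingredients (linearity of $\phi$, the equal-fiber-size argument for surjective linear maps between finite $\mathbb{F}_q$-vector spaces, and Lagrange interpolation itself) are entirely routine.
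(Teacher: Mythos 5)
Your proof is correct and follows essentially the same route as the argument in Bukh--Conlon (Lemma 2.3) and Conlon (Lemma 2), which the paper cites in place of giving its own proof: exhibit a linear form $\ell$ separating the $z$ points (using $q > \binom{z}{2}$ via a union bound over hyperplanes), compose with a univariate Lagrange interpolant of degree at most $z-1 \le d$ to show the evaluation map $P_d \to \mathbb{F}_q^z$ is surjective, and conclude uniformity of the image from surjectivity of a linear map between finite $\mathbb{F}_q$-vector spaces.
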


We now define the graphs that we will be interested in. Let $N = q^k$. We will construct an $r$-partite $r$-uniform hypergraph on $rN$ vertices as follows. Let $V_1,\ldots, V_r$ be the partite sets, each a distinct copy of $\mathbb{F}_q^{k}$. Choose $f_1, f_2, \cdots f_{k(r-1)-1}:\mathbb{F}_q^{kr}\rightarrow \mathbb{F}_q$ to be independent random polynomials of degree $d := k(2k+1)$. For $v_1,\ldots, v_r$ with $v_i\in V_i$, we declare $\{v_1,\cdots, v_r\}$ to be an edge if and only if
$$
f_1({v_1}, {v_2}, \cdots ,{v_r}) = f_2({v_1}, {v_2}, \cdots ,{v_r}) = \cdots = f_{k(r-1)-1}({v_1},{v_2},\cdots ,{v_r}) = 0
$$

We use the term {\em random polynomial graph} to describe the distribution of hypergraphs obtained this way. Since these polynomials are chosen independently, we know from Lemma~\ref{Ind1} that the probability of a given hyperedge is in a random polynomial graph is $q^{1-k(r-1)}$. The total number of possible hyperedges is $N^{r} = q^{kr}$. Therefore the expected number of hyperedges is $q^{kr}q^{1-k(r-1)} = q^{k+1} = N^{1+1/k}$. 

We will be interested in subgraphs that appear in a random polynomial graph. Since hyperedges appear when a system of polynomials vanishes, we will describe subgraphs as varieties. Let $\overline{\mathbb{F}}_q$ be the algebraic closure of $\mathbb{F}_q$. A {\em variety} over $\overline{\mathbb{F}}_q$ is a set of the form:
\[
W = \{x\in \overline{\mathbb{F}}_q^{t}: f_1(x) = f_2(x) = \cdots = f_s(x) = 0\}
\]
For a collection of polynomials $f_1, \cdots, f_s:\overline{\mathbb{F}}_q^t\rightarrow \overline{\mathbb{F}}_q$. In other words, a variety is the set of common roots of a set of polynomials. We say $W$ is defined over $\mathbb{F}_q$ if the coefficients of these polynomials are from $\mathbb{F}_q$ and write $W(\mathbb{F}_q) = W\cap \mathbb{F}_q^t$. We say $W$ has complexity at most $M$ if $s$, $t$ and the maximum degree of the polynomials are all bounded by $M$. We will be very interested in how many points can be on a variety, and we will use the following theorem, proved by Bukh and Conlon (\cite{BukhConlon} Lemma 2.7) using tools from algebraic geometry.

\begin{theorem}\label{langweil}
Suppose $W$ and $D$ are varieties over $\overline{\mathbb{F}}_q$ of complexity at most $M$ which are defined over $\mathbb{F}_q$. Then one of the following holds:
\begin{itemize}
    \item $|W(\mathbb{F}_q) \setminus D(\mathbb{F}_q)| \leq c_M$, {where $c_M$ depends only on $M$, or}
    \item $|W(\mathbb{F}_q) \setminus D(\mathbb{F}_q)| \geq q/2$.
    \end{itemize}

\end{theorem}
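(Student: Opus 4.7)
The plan is to deduce this dichotomy from the Lang--Weil estimates applied to the irreducible decomposition of $W$. First, write $W = W_1 \cup \cdots \cup W_s$ as the union of its irreducible components over $\overline{\mathbb{F}}_q$. Since $W$ has complexity at most $M$, standard effective bounds from elimination theory (a Bezout-type bound applied to the defining equations) force $s$, along with the degrees and dimensions of the $W_i$'s, to be bounded by a function of $M$ alone. Because $W$ itself is defined over $\mathbb{F}_q$, the absolute Galois group $\mathrm{Gal}(\overline{\mathbb{F}}_q / \mathbb{F}_q)$, generated topologically by Frobenius, permutes the components $W_i$. Also, one may assume $q$ is larger than some threshold depending only on $M$, since for small $q$ the bound $q/2 \le c_M$ is automatic.

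The heart of the argument is a case analysis on each component $W_i$. If $W_i$ is not fixed by Frobenius, then every $\mathbb{F}_q$-point $x \in W_i$ satisfies $x = \mathrm{Frob}(x) \in \mathrm{Frob}(W_i) = W_j$ for some $j \neq i$, so $x \in W_i \cap W_j$, a proper closed subvariety of $W_i$ of strictly smaller dimension; Lang--Weil bounds its $\mathbb{F}_q$-point count by $O_M(q^{\dim W - 1})$. If instead $W_i \subseteq D$, then $W_i$ contributes nothing to $W(\mathbb{F}_q) \setminus D(\mathbb{F}_q)$. The remaining case is that $W_i$ is Frobenius-fixed (hence defined over $\mathbb{F}_q$) and not contained in $D$; then $W_i \cap D$ is a proper subvariety of $W_i$ and has dimension at most $\dim W_i - 1$, so Lang--Weil gives
$$
|W_i(\mathbb{F}_q) \setminus D(\mathbb{F}_q)| \;\ge\; q^{\dim W_i} \;-\; O_M\!\left(q^{\dim W_i - 1/2}\right).
$$
If $\dim W_i \ge 1$, the right-hand side exceeds $q/2$ once $q$ is large enough in terms of $M$, placing us in the second alternative. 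If $\dim W_i = 0$, the component is just a finite point set of size $O_M(1)$.

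Summing over components: either some component falls into the third case with $\dim W_i \ge 1$, in which case $|W(\mathbb{F}_q) \setminus D(\mathbb{F}_q)| \ge q/2$, or else every component is either absorbed into $D$, Galois-moved, or zero-dimensional, and the total count is bounded by a constant depending only on $M$. The main obstacle is purely a bookkeeping one: one must verify that every implicit constant in the $O_M$ error terms really is uniform in $q$ and depends only on the complexity $M$. This uniformity is the whole content of the effective Lang--Weil estimate used by Bukh and Conlon as their Lemma 2.7, which is precisely why the theorem is cited rather than reproved.
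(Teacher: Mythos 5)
The paper does not actually prove this statement; it is quoted verbatim as Lemma 2.7 of Bukh and Conlon, where the proof (via the Lang--Weil estimates) lives. So there is no ``paper's proof'' to compare against, and your sketch must stand on its own.

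Your approach via Lang--Weil together with the Frobenius action on irreducible components is the right one and matches Bukh--Conlon in spirit, but there is a gap in the final accounting. In the branch where no Frobenius-fixed component of positive dimension escapes $D$, you assert that the total $\mathbb{F}_q$-point count of $W \setminus D$ is $O_M(1)$, yet the contribution you derive for the ``Galois-moved'' components is only $O_M\!\left(q^{\dim W - 1}\right)$, which is nowhere near constant once $\dim W \geq 2$. The $\mathbb{F}_q$-points of a moved component $W_i$ are trapped in $W_i \cap \mathrm{Frob}(W_i)$, but that intersection is in general still a positive-dimensional variety, and its $\mathbb{F}_q$-points need not be few. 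The fix is a descent on dimension: collect all such pairwise intersections into a single bounded-complexity variety $W' \subseteq W$ with $\dim W' < \dim W$, observe that $W(\mathbb{F}_q) \setminus D(\mathbb{F}_q)$ is, up to an $O_M(1)$ error from the zero-dimensional fixed components, contained in $W'(\mathbb{F}_q) \setminus D(\mathbb{F}_q)$, and recurse on $W'$. Either some stage produces a Frobenius-fixed positive-dimensional component outside $D$ (forcing the $\geq q/2$ alternative, since that component sits inside $W$), or after at most $\dim W = O_M(1)$ stages everything remaining is zero-dimensional and therefore $O_M(1)$. Without this recursion the ``or else'' branch of your dichotomy is not justified; with it, the argument closes and is, I believe, essentially what Bukh and Conlon do.
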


Using these tools, we will show in the following section that we may modify a random polynomial graph to obtain a $\Theta_{k,t}^B$ free hypergraph with $\Omega(n^{1+1/k})$ edges with high probability.

\section{Proof of Theorem 4}\label{lower bound proof}
In this section we show that there exists an $n$ vertex $\Theta_{k,t}^B$ free hypergraph with $\Omega(n^{1+1/k})$ hyperedges. Our proof is an adaption of \cite{Conlon} to the hypergraph setting. Let $q$ be a sufficiently large prime power and let $G$ be a random polynomial graph on $n = rN = rq^k$ vertices defined in the previous section. As noted before the expected number of hyperedges in $G$ is 

\begin{equation}\label{number of edges}
N^{1+1/k} = \Omega(n^{1+1/k}).
\end{equation}

We are interested in $\Theta_{k,t}^B$ as a subgraph of $G$, and so we will be interested in Berge paths between vertices. Suppose now that $x$ and $y$ are two fixed vertices in $G$ and let $S$ be the set of Berge-paths with length $k$ between them. We will be interested in the moments of the random variable $|S|$. Let $m$ be fixed and note that $|S|^m$ is the number of collections of $m$ Berge paths of length $k$ between $x$ and $y$. These paths can be overlapping or identical, and the total number of hyperedges in any collection of $m$ paths is at most $km$. Since $q$ is sufficiently large, Lemma~\ref{Ind2} implies that for $z\leq d$ the probability of any particular collection with $z$ hyperedges is in $G$ is $q^{z(1-k(r-1))}$, since the probability that any hyperedge is in $G$ is $q^{1-k(r-1)}$. Now if we denote $P_{m,z}$ as the number of collections of $m$ paths between $x$ and $y$ such that their intersection has $z$ hyperedges in total, we have:
$$
\mathbb{E}[|S|^m] = \sum_{z = 1}^{km}P_{m,z}q^{z-zk(r-1)},
$$
as long as $km \leq d$. Now we shall estimate $P_{m,z}$ by estimating the maximum number of vertices there can be in any particular collection with $z$ hyperedges. Namely, if a collection has hyperedges $h_1, \cdots h_z$, then we want to estimate $\max|\cup_{i\in [z]}h_i|$. 

\begin{claim}\label{coreV}
If the union of $m$ Berge-paths from $x$ to $y$, each of length $k$, has $z$ edges in their intersection $\{h_i\}_{i=1}^z$, then 
\[
\left| \bigcup_{i=1}^z h_i \setminus\{x,y\}\right| \leq \frac{zk(r-1)-z}{k}.
\]

\end{claim}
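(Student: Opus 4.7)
The plan is to build the union of the $m$ Berge-paths incrementally, ordering them as $P_1,\ldots,P_m$, and to bound the number of new non-$\{x,y\}$ vertices introduced by each path. For $1\le j\le m$, let $W_j$ be the set of non-$\{x,y\}$ vertices appearing in the hyperedges of $P_1,\ldots,P_j$, and let $s_j$ denote the number of hyperedges of $P_j$ already appearing in $P_1\cup\cdots\cup P_{j-1}$. Then $P_j$ contributes $k-s_j$ genuinely new hyperedges, so $z=\sum_{j=1}^{m}(k-s_j)$.

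For each $P_j$ with $s_j<k$, I would split its new vertices into two types. First, non-core: each new hyperedge of $P_j$ has $r$ vertices, two of which are core vertices of $P_j$ (distinct by the definition of a Berge-path), so the new hyperedge contributes at most $r-2$ non-core vertices, giving at most $(k-s_j)(r-2)$ in total. Second, internal core: the internal core vertex $v_i^j$ (with $1\le i\le k-1$) lies in both $h_i^j$ and $h_{i+1}^j$, so whenever either hyperedge is shared, $v_i^j$ is already in $W_{j-1}$. Hence $v_i^j$ can be new only when both adjacent hyperedges are new; among the $k$ positions with $s_j$ flagged as shared, the number of such consecutive ``new--new'' pairs is at most $k-1-s_j$, with the extremum attained when all shared positions form a contiguous block at a boundary. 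Combining,
\[
|W_j|-|W_{j-1}| \;\le\; (k-s_j)(r-2)+(k-1-s_j) \;=\; (k-s_j)(r-1)-1.
\]

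Telescoping over the productive paths (those with $s_j<k$; paths with $s_j=k$ introduce no new hyperedges and thus no new vertices) yields
\[
\Bigl|\,\bigcup_{i=1}^{z} h_i \setminus \{x,y\}\,\Bigr| \;\le\; (r-1)z - p,
\]
where $p$ is the number of productive paths. Since each productive path supplies at least one and at most $k$ of the $z$ new hyperedges, we have $p\ge z/k$, and substituting gives the claimed bound $\tfrac{zk(r-1)-z}{k}$.

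The most delicate step is the new--new pair count in the second paragraph: the $-1$ gained per productive path is exactly what sharpens the naive estimate $(r-1)z$ into $(r-1)z-z/k$, so correctly identifying the block-at-the-boundary configuration as extremal is the key combinatorial observation. Once that is in hand, everything else reduces to the telescoping and the simple pigeonhole bound $p\ge z/k$.
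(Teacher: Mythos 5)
Your proof is correct and follows essentially the same strategy as the paper's: bound the number of new (non-$\{x,y\}$) vertices contributed by each productive path by $(\text{new hyperedges})\cdot(r-1)-1$, telescope over the paths, and use that the number of productive paths is at least $z/k$. You arrive at the per-path bound via an explicit split into non-core vertices and ``new--new'' consecutive pairs, while the paper derives the same inequality from the overlap of consecutive edges and a floor-function estimate, and the paper tags a path as productive when it adds new vertices ($n_i>0$) rather than new hyperedges ($s_j<k$), but both variants yield the identical final bound.
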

\begin{proof}
Given such a collection, assume $|\cup h_i \setminus \{x,y\}| = n_0$. Let $P_1,\ldots, P_{m}$ be the set of paths, and let $n_i$ and $z_i$ be the number of vertices and edges respectively in $P_i \setminus \left(P_1 \cup \cdots \cup P_{i-1}\right)$. Let $z_i' = z_i$ if $n_i >0$ and $z_i' = 0$ if $n_i=0$.

If $n_i > 0$, then because consecutive edges in a Berge path must overlap, we have that $z_i \geq \lfloor\frac{n_i}{r-1}+1\rfloor$. Since $n_i$ is an integer, this implies that $z_i \geq \frac{n_i}{r-1} - \frac{r-2}{r-1} + 1 = \frac{n_i+1}{r-1}$. Let $r'$ be the number of $n_i$ which are greater than $0$. Let 
\[
z' = \sum_{i=1}^{m} z_i' \leq r'k,
\]
and so $r' \geq \frac{z'}{k}$. On the other hand,
\[
z' \geq \sum_{i: n_i>0} \frac{n_i + 1}{r-1} = \frac{n_0 + r'}{r-1} \geq \frac{n_0}{r-1} + \frac{z'}{k(r-1)}.
\]

This implies 
\[
n_0 \leq \frac{z'k(r-1)-z'}{k}.
\]
Since $z'\leq z$ the result follows.

\end{proof}

We can now bound $P_{m,z}$.
\begin{claim}
$P_{m,z} =  O_{k,r}(q^{zk(r-1)-z})$
\end{claim}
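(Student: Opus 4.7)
The plan is to bound $P_{m,z}$ by organizing each contributing collection according to a combinatorial \emph{template} that records how its $m$ Berge paths overlap, and then counting realizations of each template by actual vertices of the $r$-partite graph $G$. This cleanly separates the discrete data (bounded in terms of $k,m,r$) from the arithmetic count of how vertex slots can be assigned from $\mathbb{F}_q^k$.

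More concretely, a template specifies: the sequences of $k+1$ abstract core-vertex slots and $k$ abstract edge slots for each of the $m$ paths, with $x$ and $y$ at the endpoints; an equivalence relation on the $km$ edge slots with exactly $z$ classes (representing the $z$ distinct hyperedges); an equivalence relation on the core-vertex slots; and, for each edge class, abstract labels for its $r-2$ non-core vertex slots together with identifications among non-core slots across different edge classes. A template must be consistent with the Berge-path requirements: consecutive edges in the same path share a core vertex, and the $k+1$ core vertices and the $k$ edges within a single path are distinct. All of this data is discrete, and because $k,m,r$ are fixed, the number of valid templates is bounded by some constant $C=C(k,m,r)$.

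Next I would count realizations of a fixed template. By Claim~\ref{coreV}, the number of distinct vertex labels appearing in the template, other than $x$ and $y$, is at most $n_0 := \frac{zk(r-1)-z}{k}$. Since $G$ is $r$-partite with each part a copy of $\mathbb{F}_q^k$, the partite class of every abstract vertex is forced by the template (each hyperedge contains exactly one vertex from each $V_i$), and realizing the template amounts to choosing an element of the appropriate $V_i$ for each of the at most $n_0$ non-$\{x,y\}$ abstract vertices. Each such choice contributes at most $|V_i|=q^k$ possibilities, so each template has at most $(q^k)^{n_0}\le q^{zk(r-1)-z}$ realizations. Summing over templates yields
$$P_{m,z}\le C\cdot q^{zk(r-1)-z}=O_{k,r}\bigl(q^{zk(r-1)-z}\bigr),$$
where the dependence on the fixed moment parameter $m$ is absorbed into the implicit constant.

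The main (mild) obstacle is organizing the template enumeration so that every collection is captured exactly once and the template count is manifestly finite; this is a finitary bookkeeping step, since the number of abstract vertices is at most $km(r-1)+2$ and the number of abstract edges is $km$, both independent of $q$. No algebraic-geometric input is required: the claim reduces essentially to Claim~\ref{coreV} combined with the $r$-partite vertex count and the fact that $|V_i|=q^k$.
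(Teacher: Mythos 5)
Your proposal is correct and follows essentially the same approach as the paper: both arguments invoke Claim~\ref{coreV} to bound the number of distinct non-$\{x,y\}$ vertices in the union of the $z$ hyperedges by $\frac{zk(r-1)-z}{k}$, then count vertex assignments (each from a set of size $O(q^k)$) to get $q^{zk(r-1)-z}$ and absorb the remaining discrete bookkeeping into the $O_{k,r}$ constant. Your template formulation is a somewhat more explicit way of handling the bookkeeping that the paper treats more briskly (choose vertices, then $r$-subsets as hyperedges, then core pairs), but it is the same counting argument.
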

\begin{proof}
We want to count the number of all possible collections of $m$ Berge-paths between $x$ and $y$ with $z$ edges in their intersection. Let $V = \frac{zk(r-1)-z}{k}$ be the upper bound on the number of vertices, then we first choose the vertices that will be in the collection. There are less than $(rN)^V$ number of ways to do this. Then we choose $z$ hyperedges, each having $r$ vertices. The number such choices is bounded by ${V\choose r}^z$. Last we choose $2$ vertices from each hyperedge to make up the core vertices. This is bounded by ${r\choose 2}^z$, which eventually gives us 
$$
r^V{V\choose r}^z{r\choose 2}^zN^V =  O_{k,r}(N^V) = O_{k,r}(q^{zk(r-1)-z})
$$
Note that every collection of $m$ paths is counted by this method in at least one way, giving the upper bound on $P_{m,z}$.
\end{proof}

Thus, when $m\leq 2k+1$ we have $z\leq km \leq d$ (since $d = k(2k+1)$) and we may apply Lemma \ref{Ind2} to find  
\begin{equation}\label{moment}
\mathbb{E}[|S|^m] = \sum_{z = 1}^{km}P_{m,z}q^{z-zk(r-1)} \le kmC_{k,r} := C.
\end{equation}
Where $C_{k,r}$ is a constant dependent on $k$ and $r$, and $C$ is used to simplify our notation. 


Now if we want to apply tools from algebraic geometry, we need to write $S$ as a variety. However, this cannot be done directly since there is no fixed set of polynomials whose set of common roots is exactly $S$. Therefore, we use the following analysis to bound $|S|$. Any path in $S$ is a sequence of core vertices and edges $(x, h_1, v_1, h_2, \cdots, v_{k-1}, h_k, y)$. We may partition the set of paths into which partite set each $v_i$ is in. That is, $S$ can be partitioned into disjoint sets depending on which partite sets each core vertex belongs to. Namely, we can let $S_{t_1, \cdots t_{k-1}}$ denote the set of paths from $x$ to $y$ such that the $i$th core vertex $v_i$ belongs to $V_{t_i}$. 

Now if we let $\sigma$ denote any length $k-1$ tuple from $[r]^{k-1}$, then we have
$$
S = \bigcup_{\sigma\in [r]^{k-1}}S_\sigma,
$$
and this is a disjoint union.

Fix any arbitrary $S_\sigma$. For notation, we denote the core vertices in an arbitrary path as $v_1, \cdots, v_{k-1}$ and the non core vertices in hyperedge $h_i$ as $w_1^{i}, \cdots, w_{r-2}^i$. We also need to make sure that the non core vertices are ordered based on their partite sets. In other words, if $w_j^{i}\in V_{t_1}$ and $w_k^{i}\in V_{t_2}$ where $j < k$, then $t_1 < t_2$.

Now we may define the variety $T_\sigma$ as
\begin{ceqn}
\begin{align*}
\begin{split}& \{f_{i,1}(p) = \cdots = f_{i,k}(p) = 0\mbox{ for all $i$ in $[k(r-1)-1]$}\},   
\end{split}
\end{align*}    
\end{ceqn}

where $p\in \mathbb{F}_q^{k(r-2)+k-1}$ runs over sequences ($v_1,\cdots, v_{k-1}, w_1^1,\cdots, w_{r-2}^1, \cdots, w_1^k,\cdots, w_{r-2}^k$ (that is, each $p$ is a vector ordered with the core vertices first and the non core vertices after).

Here the polynomials $f_{i,1},\cdots, f_{i,k}$ are extensions to the polynomial $f_i$. Namely, 
$$f_{i,1}(p) = f_i(x, v_1, w_1^1, \cdots, w_{r-2}^1)$$
$$f_{i,2}(p) = f_i(v_1, v_2, w_1^2, \cdots, w_{r-2}^2)$$
$$\cdots$$
$$f_{i,k}(p) = f_i(v_{k-1}, y, w_1^k, \cdots, w_{r-2}^k)$$
Note that depending on $\sigma$ (the ordering of partite sets on core vertices), the arguments given to $f_i$ need to be reordered. However, since $\sigma$ is fixed and so is the ordering of all non core vertices, we can fix the order of arguments given to $f_i$ according to $\sigma$. For instance, if $x\in V_2$, $v_1\in V_3$, then 

$$
f_{i, 1}(p) = f_i(w_1^1, x, v_1, w_2^1, \cdots, w_{r-2}^1)
$$

With this restriction on ordering, we see that $S_\sigma\subseteq T_\sigma(\mathbb{F}_q)$. Note that $T_\sigma$ contains all of the paths in $S_\sigma$, but may also contain walks that are not paths.

If $T_\sigma(\mathbb{F}_q)$ contains a degenerate walk $x, v_1, v_2, \cdots, v_{k-1}, y$, then one of the following three conditions must be true: $x = v_b$ for some $b\in [k-1]$, $v_a = y$ for some $a\in [k-1]$, or $v_a = v_b$ for some $a\ne b\in [k-1]$. Therefore we can consider the collections of sets:
\begin{ceqn}
\begin{align*}
W_{0,b} &= T_\sigma \cap \{v_1, \cdots, w_{r-2}^k: x = v_b\} \\
W_{a,b} &= T_\sigma \cap \{v_1, \cdots, w_{r-2}^k: v_a = v_b\} \\
W_{a,0} &= T_\sigma \cap \{v_1, \cdots, w_{r-2}^k: v_a = y\}
\end{align*}
\end{ceqn}

Each of these sets is also a variety with complexity bounded in terms of $k$ and $r$. Let $W$ be the union of all of the $W_{0,b}$, $W_{a,b}$, and $W_{a,0}$. If $X$ and $Y$ are varieties with complexity bounded in terms of $k$ and $r$, then we claim that $X\cup Y$ is also a variety with complexity bounded in terms of $k$ and $r$. To see this, if $X$ is the set of points where the set of polynomials $\{f_i\}$ vanish and $Y$ is the set of points where the polynomials $\{g_j\}$ vanish, then $X\cup Y$ is exactly the set of points where the polynomials $\{f_ig_j\}$ vanish. Since $W$ is the union of $O(k^2)$ varieties, we have that $W$ is also a variety with complexity bounded in terms of $k$ and $r$. Since $S_\sigma = T_\sigma \setminus W$, we may apply Theorem \ref{langweil} to $S_\sigma$.



Now if we put everything together, we see that there exists a constant $c_{\sigma}$, dependent on $k$ and $r$, such that either $|S_\sigma|\le c_{\sigma}$ or $|S_\sigma|\ge \frac{q}{2}$. This conclusion holds true for any arbitrary $\sigma\in [r]^{k-1}$. Looking at $S$, we see that either $|S| = \sum_{\sigma\in [r]^{k=1}}|S_\sigma|\le C_{k,r}$ for some constant $C_{k,r}$ dependent on $k$ and $r$, or $|S| > C_{k,r}$, which implies that there exists $\sigma$ such that $|S_\sigma| > c_{\sigma}$ and therefore $|S|\ge |S_\sigma| > \frac{q}{2}$. Now by \eqref{moment} and Markov's inequality, for $m\leq 2k+1$ we have
$$
\mathbb{P}[|S| > C_{k,r}] = \mathbb{P}[|S| > \frac{q}{2}] =\mathbb{P}[|S|^m > (q/2)^m] \le \frac{C}{(q/2)^m}.
$$
Call a pair of vertices $(x,y)$ bad if it has more than $C_{k,r}$ length $k$ paths between them. If $B$ is the random variable denoting the number of bad pairs, then we have
\[
\mathbb{E}(B) \le N^2\times \frac{C}{(q/2)^m} = O_{k,r}(q^{2k-m}) = O_{k,r}\left(\frac{1}{q}\right),
\]
when we take $m=2k+1$. Therefore by Markov's Inequality, $\mathbb{P}[B \ge 1] \rightarrow 0$ as $n\rightarrow \infty$. Now let $X$ be the number of edges, then by \eqref{number of edges} the expected number of edges in is $N^{1+1/k}$. The variance is 
\[
\mathbb{E}[X^2] - \mathbb{E}[X]^2 = \sum_{i,j}\mathbb{E}[H_iH_j]-\mathbb{E}[H_i]\mathbb{E}[H_j] = N^r(q^{1-k(r-1)} - q^{2-2k(r-1)})\le \mathbb{E}[X]
\]
Where $H_i$ are indicator random variables for hyperedges and for $i\ne j$, $\mathbb{E}[H_iH_j] = \mathbb{E}[H_i]\mathbb{E}[H_j]$ because $H_i$ and $H_j$ are independent due to Lemma \ref{Ind2}. By Chebyshev's Inequality, 
\[
\mathbb{P}[|X - N^{1+1/k}|\ge \frac{1}{2}N^{1+1/k}] \le \frac{1}{4N^{1+1/k}}
\]
which goes to zero. Therefore with high probability this hypergraph on $rN$ vertices has $\Omega_{k,r}(N^{1+1/k})$ edges, and it contains no $\Theta_{k,C_{k,r}+1}^B$. This completes the proof of Theorem~\ref{lowerbound}.

\section{Conclusion}
In this paper we showed that for fixed $k,r,t$ there is a constant $c_{k,r,t}$ such that 
$$
\ex_r(n, \Theta_{k,t}^B) \le c_{r,k,t}n^{1+\frac{1}{k}},
$$
and that this order of magnitude is correct when $t$ is large enough relative to $r$ and $k$. That is, for fixed $k$ and $r$ there is a constant $c_{k,r}$ such that
$$
\ex_r(n, \Theta_{k,c_{k,r}}^B) = \Omega_{k,r}(n^{1+\frac{1}{k}}).
$$

We end with some open questions. First, it would be interesting to determine the dependence on $r$. Even when $t=2$ and $k\in \{2,3,5\}$ this dependence is unknown. For example, it is known that $\mathrm{ex}_r(n, C_4^B) = \Theta\left(n^{3/2}\right)$ when $2\leq r\leq 6$, but the order of magnitude is unknown for $r\geq 7$ (c.f. \cite{Tompkins}). It would also be interesting to determine the dependence on $t$ when $t$ is large. Finally, in this paper we worked with the least restrictive definition of paths between vertices in hypergraphs. One could forbid only certain types of paths and demand that no pair of vertices have more than $t$ of these paths between them. 
\bibliographystyle{plain}
\bibliography{bib}

\end{document}